\documentclass{amsart}
\usepackage{amsmath}
\usepackage{amssymb}
\usepackage{eurosym}
\usepackage{amsfonts}
\usepackage{graphicx}
\usepackage{mathtools}
\allowdisplaybreaks[4]

\newtheorem{theorem}{Theorem}
\theoremstyle{plain}

\newtheorem{corollary}{Corollary}

\newtheorem{lemma}{Lemma}

\newtheorem{proposition}{Proposition}

\numberwithin{equation}{section}

\begin{document}
\title[Clifford Julia Dynamics]
{Vector Invariance and Structural Closure of Julia-Type Iterations in Clifford Algebra}
\author{Orgest ZAKA}
\address{Department of Mathematics-Informatics, \\
Faculty of Economy and Agribusiness,\\
Agricultural University of Tirana, \\ Albania.\\
ORCID:  https://orcid.org/0000-0001-8431-8347}
\email{ozaka@ubt.edu.al, gertizaka@yahoo.com}
\subjclass[2000]{15A66, 37F50, 30C35, 11S80}
\keywords{Clifford algebra, Julia sets, geometric algebra, dynamical systems, geometric product, vector invariance, fractals, higher-dimensional iterations}

\begin{abstract}
In this paper, we introduce a Clifford algebra framework for Julia-type
dynamics driven by the geometric product. The nonlinear iteration
\[
f(\vec{x}) = (\vec{x}\diamond \vec{n})^p \diamond \vec{n} + \vec{c},
\qquad p \ge 2,
\]
is studied in a real $n$-dimensional inner-product space $V$, where
$\vec{x}, \vec{n}, \vec{c} \in V$ and $\vec{n}$ is a unit vector.

The main result reveals a previously unreported invariance phenomenon:
although the geometric product generates higher-grade multivector
components at intermediate stages, a built-in grade-reduction mechanism
ensures complete collapse back to the vector subspace. Consequently, the
Clifford Julia operator is shown to be closed on $V$, and the iteration
defines a well-posed nonlinear dynamical system in arbitrary dimensions.

This invariance is established through a structural decomposition of the
Clifford product and an inductive closure argument, supported by explicit
verification in low-dimensional cases and a general proof in
$\mathbb{R}^n$. The results demonstrate that classical Julia dynamics can
be consistently extended beyond the complex plane into higher-dimensional
geometric algebra without loss of geometric interpretability.

The framework opens a new direction for fractal-type dynamics in
Clifford algebras, providing a unified algebraic setting for
higher-dimensional invariant-preserving iterative systems.
\end{abstract}

\maketitle

\tableofcontents

\section{Introduction}% and Preliminaries}

The generation of Julia-type fractal sets traditionally relies on the iteration of complex functions $z \mapsto z^p + c$. This paper proposes a generalization of this process to $n$-dimensional vector spaces $V$ through Geometric Algebra (GA), exploiting the isomorphism between complex numbers and planes defined by the geometric product. In this context, we consider the geometric product operator $\diamond$, which unifies the inner product ($\cdot$) and the outer product ($\wedge$):
$$ \vec a \diamond \vec b = \vec a \cdot \vec b + \vec a \wedge \vec b .$$
For a unit vector $n \in V$ ($\vec{n}^2 =\vec n \diamond \vec n= 1$), the Julia iteration is rewritten in vector form as:

\begin{equation} \label{eq:iterator}
\vec{x}_{k+1} = (\vec{x_k} \diamond \vec{n})^p \diamond \vec{n} + \vec{c}
\end{equation}

where $x, n, c \in V$. Although the geometric product in the equation \eqref{eq:iterator} generates multivectors of different degrees at intermediate steps, we propose that the specific structure of this operator preserves degree invariance. The main goal of this paper is to prove mathematically that this iteration is closed within the vector space $V$, providing a rigorous method for generating fractals in high dimensions without the need for artificial projections.

The present work is situated at the intersection of geometric algebra, fractal geometry, and higher-dimensional dynamical systems. The foundational algebraic framework is based on Clifford Algebra and Geometric Calculus as developed in \cite{Hestenes1984,Dorst2007}, which provide the natural language for extending classical complex dynamics to multivector settings. Fractal theory and iterative complex dynamics are grounded in the classical results of Falconer \cite{Falconer2003}, while higher-dimensional and hypercomplex generalizations of fractal constructions are addressed in \cite{Dang2002,Dunham1999,Barrallo2010}. 

A significant part of the mathematical background of this work is also
rooted in the author’s previous contributions to affine and Desarguesian
plane geometry and their associated algebraic structures. In particular,
earlier studies have developed algebraic models for point-line systems,
collineations, and transformation groups in affine planes, together with
their induced skew-field and group structures \cite{FilipiZakaJusufi,ZakaThesisPhd,ZakaCollineations,ZakaVertex,ZakaDilauto,ZakaFilipi2016}. 

Further developments have focused on invariant-preserving transformations,
cross-ratio geometry, and Dyck-type group structures on polygonal cycles,
revealing deep connections between geometric configurations and algebraic
operations \cite{ZakaPeters2022DyckFreeGroup,ZakaPeters2022InvariantPreserving,
ZakaPeters2025CrosRatio,ZakaPeters2024,PetersZakaFundamentalGroup2023}. 

These results were complemented by studies on endomorphism algebras,
translation groups, and skew-field constructions in affine settings
\cite{ZakaMohammedEndo, ZakaMohammedSF, ZakaPetersIso, ZakaPetersOrder}. 

Collectively, these works establish a coherent geometric-algebraic
framework that naturally motivates the present extension into
Clifford algebra and higher-dimensional geometric dynamics, where
classical affine invariants are generalized into multivector-valued transformations and iterative structures.

Recent advances in the use of geometric algebra for fractal generation and computational geometry, particularly in \cite{Wareham2011,Lasenby2004,Wareham2006,Wareham2008}, motivate the algebraic formulation adopted in this paper. Applications in computer graphics, robotics, and geometric modeling further support the relevance of this framework, as discussed in \cite{Zaka2018,Brannan1999}. The SIGGRAPH course notes \cite{Siggraph2001} and related technical reports \cite{Lasenby2004TR} provide additional computational and geometric perspectives.

Together, these works establish a solid theoretical and applied background for extending Julia-type iterations into the Clifford algebra setting, while highlighting the gap addressed in this paper: the preservation of vector subspace invariance under nonlinear geometric product iterations in arbitrary dimensions.

The main objective is to determine whether the iteration preserves the vector subspace

\section{Explicit Low-Dimensional Cases}

Before establishing the general vector invariance result in arbitrary
dimension, it is useful to examine explicitly the cases of
$\mathbb{R}^{2}$ and $\mathbb{R}^{3}$.
These computations illustrate how the geometric product generates
intermediate multivector terms while the final expression remains
entirely in the vector subspace.

The two-dimensional case reproduces the classical quadratic complex
structure inside Clifford algebra, whereas the three-dimensional case
shows how higher-grade terms cancel naturally through antisymmetry of
the exterior product.

\begin{proposition}[Vector Invariance in $\mathbb{R}^{2}$]
Let
\[
\vec{x},\vec{n},\vec{c}\in \mathbb{R}^{2},
\]
where $\vec n$ is a unit vector satisfying
\[
\vec n\diamond \vec n =1.
\]

For every integer $p\ge 2$, define
\[
f(\vec{x})
=
(\vec{x}\diamond \vec n)^p\diamond \vec n+\vec c.
\]

Then
\[
f:\mathbb{R}^{2}\to \mathbb{R}^{2}
\]
is well-defined.

More precisely,
\[
(\vec{x}\diamond \vec n)^p\diamond \vec n
\]
is always a vector in $\mathbb{R}^{2}$.
Consequently, every iterate of the associated Clifford Julia iteration
remains inside the vector space $\mathbb{R}^{2}$.
\end{proposition}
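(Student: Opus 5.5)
The plan is to work in the Clifford algebra $Cl(\mathbb{R}^2)$ and use its grading: scalars, vectors, and the bivector line spanned by $e_{12}=e_1\wedge e_2$. The even subalgebra $Cl^+(\mathbb{R}^2)=\operatorname{span}\{1,e_{12}\}$ is closed under the geometric product. Since $e_{12}^2=-1$, it is isomorphic to $\mathbb{C}$ via $e_{12}\mapsto i$.

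First, for vectors $\vec x,\vec n$ we have
\[
\vec x\diamond\vec n=\vec x\cdot\vec n+\vec x\wedge\vec n=a+b\,e_{12},
\]
with $a=\vec x\cdot\vec n$ and $\vec x\wedge\vec n=b\,e_{12}$. This is an even multivector, a ``spinor'' $z=a+b\,e_{12}$. By closure of $Cl^+$ under products, induction on $p$ gives $z^p=A+B\,e_{12}$ for real $A,B$, which are the real and imaginary parts of $(a+bi)^p$. Only the evenness matters here, not the explicit coefficients.

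Next, I show that an even element times a vector is a vector. It suffices to check $1\diamond\vec n=\vec n$ and $e_{12}\diamond\vec v\in\mathbb{R}^2$ for every vector $\vec v$. By linearity the second reduces to the basis computations
\[
e_{12}e_1=e_1e_2e_1=-e_2,\qquad e_{12}e_2=e_1e_2e_2=e_1.
\]
These use anticommutation $e_ie_j=-e_je_i$ for $i\neq j$ and $e_i^2=1$. Hence
\[
(\vec x\diamond\vec n)^p\diamond\vec n=A\,\vec n+B\,e_{12}\vec n\in\mathbb{R}^2,
\]
so $f(\vec x)$ is a vector once $\vec c$ is added. Geometrically, $e_{12}\vec n$ is $\vec n$ rotated by $-90^\circ$, so the map is $\vec x\mapsto$ a rotation-dilation of $\vec n$. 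One could note in passing that this recovers $z\mapsto z^p+c$ under the identification $\vec v\mapsto \vec v\diamond\vec n$.

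Finally, the statement about iterates follows by induction on $k$. If $\vec x_k\in\mathbb{R}^2$, the above gives $\vec x_{k+1}=f(\vec x_k)\in\mathbb{R}^2$.

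No step is a real obstacle. The only point needing care is the parity argument in the second step: checking that the odd-times-even products land in grade $1$ rather than grade $3$. This is automatic in dimension $2$ because there is no trivector, but I will state it explicitly via the basis computation so that it generalizes.
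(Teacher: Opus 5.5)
Your proof is correct and uses the same mechanism as the paper's proof: a scalar-plus-bivector element of $Cl(\mathbb{R}^2)$ sends vectors to vectors, checked on the basis via $e_{12}e_1=-e_2$ and $e_{12}e_2=e_1$, which match the paper's identities $(\vec e_2\diamond\vec e_1)\diamond\vec e_1=\vec e_2$ and $(\vec e_2\diamond\vec e_1)\diamond\vec e_2=-\vec e_1$ because $\vec e_2\diamond\vec e_1=-e_{12}$. The difference is only in grouping the product: the paper normalizes $\vec n=\vec e_1$ and inducts by left-multiplying the vector $(\vec x\diamond\vec e_1)^{p-1}\diamond\vec e_1$ by $\vec x\diamond\vec e_1$, whereas you compute $z^p$ inside $Cl^+(\mathbb{R}^2)\cong\mathbb{C}$ and multiply by $\vec n$ once, which avoids the normalization and gives the coefficients in closed form.
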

\begin{proof}
We work in $\mathbb{R}^2$ with orthonormal basis
$\{\vec e_1,\vec e_2\}$ and choose $\vec n = \vec e_1$.
Let
\[
\vec x = x_1 \vec e_1 + x_2 \vec e_2.
\]

\medskip

\textbf{Case $p=1$.}
We compute
\[
(\vec x \diamond \vec e_1)\diamond \vec e_1
=
\vec x \diamond (\vec e_1 \diamond \vec e_1).
\]
Since $\vec e_1 \diamond \vec e_1 = 1$, it follows that
\[
(\vec x \diamond \vec e_1)\diamond \vec e_1 = \vec x,
\]
which is a vector in $\mathbb{R}^2$.

\medskip

\textbf{Case $p=2$.} 
We now consider the first nontrivial case $p=2$. To get a clear presentation of this, let's start by writing this on the components for $\vec{n}=\vec{e}_{1}$ and $p=2,$ (also remember that $\vec{x}=x_{1}\vec{e}_{1}+x_{2}\vec{e}_{2}$) we obtain for the product term
\begin{align*}
(\vec{x}\diamond \vec{e}_1)^2 \diamond \vec{e}_1
&=
(\vec{x}\diamond \vec{e}_1)\diamond(\vec{x}\diamond \vec{e}_1)\diamond \vec{e}_1 \\[4pt]
&=
(\vec{x}\diamond \vec{e}_1\diamond \vec{x})\diamond(\vec{e}_1\diamond \vec{e}_1) \\[4pt]
&=
(\vec{x}\diamond \vec{e}_1\diamond \vec{x})
\diamond
(\vec{e}_1\cdot \vec{e}_1 + \vec{e}_1\wedge \vec{e}_1) \\[4pt]
&=
\vec{x}\diamond \vec{e}_1\diamond \vec{x}. \quad \\
&\text{(because $\overrightarrow{e_{1}}\cdot \vec{e}_{1}=1$ and $\vec{e}_{1}\wedge \vec{e}_{1}=0$)} \\
%\end{align*}
%\begin{align*}
%(\vec{x}\diamond \vec{e}_1)^2 \diamond \vec{e}_1
&=
\Big( (x_1\vec{e}_1 + x_2\vec{e}_2)\diamond \vec{e}_1 \diamond (x_1\vec{e}_1 + x_2\vec{e}_2) \Big) \\[4pt]
&=
\Big[ x_1(\vec{e}_1\diamond \vec{e}_1)
     + x_2(\vec{e}_2\diamond \vec{e}_1) \Big]
\diamond
(x_1\vec{e}_1 + x_2\vec{e}_2) \\[4pt]
&=
\Big[ x_1(\vec{e}_1\cdot \vec{e}_1 + \vec{e}_1\wedge \vec{e}_1)
     + x_2(\vec{e}_2\cdot \vec{e}_1 + \vec{e}_2\wedge \vec{e}_1) \Big]
\diamond
(x_1\vec{e}_1 + x_2\vec{e}_2) \\[4pt]
&=
\Big[ x_1 + x_2(\vec{e}_2\wedge \vec{e}_1) \Big]
\diamond
(x_1\vec{e}_1 + x_2\vec{e}_2). \\
%\end{align*}
%From the fact that:
%\begin{align*}
& \text{(but $\vec{e}_2 \cdot \vec{e}_1 = 0$ and $\vec{e}_1 \cdot \vec{e}_1 = 1.$)}\\
%\vec{e}_2 \cdot \vec{e}_1 &= 0, \qquad \vec{e}_1 \cdot \vec{e}_1 = 1.
%\end{align*}
%Continuing, we have:
%\begin{align*}
%\vec{e}_{2}\cdot \vec{e}_{1} &= 0, \qquad \vec{e}_{1}\cdot \vec{e}_{1} = 1, \\[4pt]
&= x_{1}\Big[ x_{1}+x_{2}(\vec{e}_{2}\wedge \vec{e}_{1}) \Big]\diamond \vec{e}_{1}
 + x_{2}\Big[ x_{1}+x_{2}(\vec{e}_{2}\wedge \vec{e}_{1}) \Big]\diamond \vec{e}_{2} \\[4pt]
&= \Big[ x_{1}^{2}+x_{1}x_{2}(\vec{e}_{2}\wedge \vec{e}_{1}) \Big]\diamond \vec{e}_{1}
 + \Big[ x_{1}x_{2}+x_{2}^{2}(\vec{e}_{2}\wedge \vec{e}_{1}) \Big]\diamond \vec{e}_{2} \\[4pt]
&= x_{1}^{2}\vec{e}_{1}
 + x_{1}x_{2}(\vec{e}_{2}\wedge \vec{e}_{1})\diamond \vec{e}_{1}
 + x_{1}x_{2}\vec{e}_{2}
 - x_{2}^{2}(\vec{e}_{1}\wedge \vec{e}_{2})\diamond \vec{e}_{2},
\end{align*}
usage:
\[
(\vec{e}_{2}\wedge \vec{e}_{1}) \diamond \vec{e}_{1} = \vec{e}_{2}, \quad \quad
\vec{e}_{2}\wedge \vec{e}_{1} = -\,\vec{e}_{1}\wedge \vec{e}_{2}, \quad \text{and}\quad 
(\vec{e}_{1}\wedge \vec{e}_{2}) \diamond \vec{e}_{2} = \vec{e}_{1},
\]
substituting, we get:
\begin{align*}
&= x_{1}^{2}\vec{e}_{1}+x_{1}x_{2}\vec{e}_{2}+x_{1}x_{2}\vec{e}_{2}- x_{2}^{2}\vec{e}_{1} \\[4pt]
&= x_{1}^{2}\vec{e}_{1}+2x_{1}x_{2}\vec{e}_{2}- x_{2}^{2}\vec{e}_{1} \\[4pt]
&= (x_{1}^{2}-x_{2}^{2})\vec{e}_{1}
+ 2x_{1}x_{2}\vec{e}_{2}.
\end{align*}

%Finally we get it
%
%\begin{equation*}
%\left( \vec{x} \diamond \vec{e}_{1} \right)^{2} \diamond \vec{e}_{1}
%= \left( x_{1}^{2} - x_{2}^{2} \right)\vec{e}_{1}
%+ 2 x_{1} x_{2}\,\vec{e}_{2}.
%\end{equation*}

Hence,
\[
(\vec x\diamond \vec e_1)^2\diamond \vec e_1
=
(x_1^2-x_2^2)\vec e_1
+
2x_1x_2\vec e_2,
\]
which is again a vector in $\mathbb R^2$.

\textbf{Inductive step.}
Assume that for some $p\ge 2$ the expression
\[
(\vec x \diamond \vec e_1)^{p-1}\diamond \vec e_1
\]
is a vector in $\mathbb{R}^2$. Hence there exist scalars
$a,b \in \mathbb{R}$ such that
\[
(\vec x \diamond \vec e_1)^{p-1}\diamond \vec e_1
=
a \vec e_1 + b \vec e_2.
\]

Now consider
\begin{align*}
(\vec x \diamond \vec e_1)^p \diamond \vec e_1
&=
(\vec x \diamond \vec e_1)
\diamond
(a \vec e_1 + b \vec e_2) \\
&=
(x_1 + x_2 \vec e_2 \diamond \vec e_1)
\diamond
(a \vec e_1 + b \vec e_2).
\end{align*}

Using the Clifford relations in $\mathbb{R}^2$:
\[
\vec e_1 \diamond \vec e_1 = 1, 
\quad
\vec e_2 \diamond \vec e_2 = 1,
\quad
\vec e_2 \diamond \vec e_1 = - \vec e_1 \diamond \vec e_2,
\]
we expand term by term:

\begin{align*}
(\vec x \diamond \vec e_1)^p \diamond \vec e_1
&=
x_1(a \vec e_1 + b \vec e_2)
+ x_2 (\vec e_2 \diamond \vec e_1)
\diamond (a \vec e_1 + b \vec e_2).
\end{align*}

Now compute the second part:
\begin{align*}
(\vec e_2 \diamond \vec e_1)\diamond \vec e_1 &= \vec e_2,\\
(\vec e_2 \diamond \vec e_1)\diamond \vec e_2 &= -\vec e_1.
\end{align*}

Therefore,
\begin{align*}
(\vec x \diamond \vec e_1)^p \diamond \vec e_1
&=
x_1 a \vec e_1 + x_1 b \vec e_2
+ x_2 a \vec e_2 - x_2 b \vec e_1 \\
&=
(x_1 a - x_2 b)\vec e_1
+
(x_1 b + x_2 a)\vec e_2.
\end{align*}

Hence,
\[
(\vec x \diamond \vec e_1)^p \diamond \vec e_1 \in \mathbb{R}^2.
\]

This shows that the operator preserves the vector space structure in $\mathbb{R}^2$.

\end{proof}

\begin{proposition}[Vector Invariance in $\mathbb{R}^{3}$]
Let
\[
\vec{x},\vec{n},\vec{c}\in \mathbb{R}^{3},
\]
where $\vec n$ is a unit vector satisfying
\[
\vec n\diamond \vec n =1.
\]

For every integer $p\ge 2$, define
\[
f(\vec{x})
=
(\vec{x}\diamond \vec n)^p\diamond \vec n+\vec c.
\]

Then
\[
f:\mathbb{R}^{3}\to \mathbb{R}^{3}
\]
is well-defined.

In particular,
\[
(\vec{x}\diamond \vec n)^p\diamond \vec n
\]
belongs to the grade-$1$ subspace of the Clifford algebra associated
with $\mathbb{R}^{3}$.

Therefore, despite the appearance of bivector and trivector terms in
intermediate computations, the final expression reduces to a vector in
$\mathbb{R}^{3}$.
\end{proposition}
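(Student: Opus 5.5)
Since $\vec n\diamond\vec n=1$, right multiplication by $\vec n$ is invertible. This lets me reduce the whole statement to the $\mathbb{R}^2$ case, Proposition 1. The plan uses the fact that $\vec x$ and $\vec n$ span a plane (or a line) inside $\mathbb{R}^3$, and everything in the iteration stays inside the even subalgebra generated by that plane.

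\textbf{Step 1: degenerate case.} Write $\vec x=\alpha\vec n+\vec x_\perp$ with $\vec x_\perp\cdot\vec n=0$. If $\vec x_\perp=0$, then $\vec x\diamond\vec n=\alpha$ is a scalar. Hence $(\vec x\diamond\vec n)^p\diamond\vec n=\alpha^p\vec n$, which is a vector.

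\textbf{Step 2: choice of frame.} Otherwise, set $\vec e_1=\vec n$ and $\vec e_2=\vec x_\perp/|\vec x_\perp|$, and complete these to an orthonormal basis $\{\vec e_1,\vec e_2,\vec e_3\}$ of $\mathbb{R}^3$. This is legitimate because the Clifford algebra of $\mathbb{R}^3$ is defined independently of basis, and any orthonormal frame obeys $\vec e_i\diamond\vec e_j=-\vec e_j\diamond\vec e_i$ for $i\ne j$ and $\vec e_i\diamond\vec e_i=1$. In this frame $\vec x=x_1\vec e_1+x_2\vec e_2$, and
\[
\vec x\diamond\vec e_1=x_1+x_2\,\vec e_2\diamond\vec e_1 ,
\]
where $\vec e_2\diamond\vec e_1$ is a bivector.

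\textbf{Step 3: reduction to Proposition 1.} All factors now lie in the subalgebra generated by $\vec e_1,\vec e_2$. That subalgebra is the Clifford algebra of the plane $\mathrm{span}\{\vec e_1,\vec e_2\}\cong\mathbb{R}^2$, because the defining relations coincide. The inductive computation of Proposition 1 therefore applies verbatim. It gives
\[
(\vec x\diamond\vec e_1)^p\diamond\vec e_1=a_p\vec e_1+b_p\vec e_2 ,
\]
with $(a_1,b_1)=(x_1,x_2)$ and
\[
(a_{p},b_{p})=(x_1a_{p-1}-x_2b_{p-1},\;x_1b_{p-1}+x_2a_{p-1}).
\]
In other words, $a_p+ib_p=(x_1+ix_2)^p$. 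The result is a grade-1 element of the Clifford algebra of $\mathbb{R}^3$.

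It remains to note that $\vec c$ is a vector, so $f(\vec x)\in\mathbb{R}^3$, and by induction every iterate stays in $\mathbb{R}^3$.

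\textbf{Main obstacle.} The delicate point is justifying that no $\vec e_3$ components (bivectors $\vec e_i\wedge\vec e_3$ or the trivector $\vec e_1\wedge\vec e_2\wedge\vec e_3$) can arise. If one instead insists on a fixed basis with $\vec n=\vec e_1$ and general $\vec x=x_1\vec e_1+x_2\vec e_2+x_3\vec e_3$, this can be handled directly. Writing $B=x_2\vec e_2\diamond\vec e_1+x_3\vec e_3\diamond\vec e_1$, one checks the following:
\begin{itemize}
\item $B\diamond B=-(x_2^2+x_3^2)$, a scalar, so powers of $x_1+B$ stay of the form $s+tB$;
\item $B\diamond\vec e_1=x_2\vec e_2+x_3\vec e_3$, a vector.
\end{itemize}
The trivector terms cancel by antisymmetry. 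I would present this fixed-basis computation as a cross-check of the frame-based argument.
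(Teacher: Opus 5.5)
Your argument is correct, but it takes a different route from the paper's proof of this proposition.

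\textbf{The paper's route.} The paper keeps a fixed frame with $\vec n=\vec e_1$ and a general $\vec x=x_1\vec e_1+x_2\vec e_2+x_3\vec e_3$. It expands $p=2$ by hand: two trivector terms cancel, leaving $(x_1^2-x_2^2-x_3^2)\vec e_1+2x_1x_2\vec e_2+2x_1x_3\vec e_3$. It then inducts with the hypothesis that the $(p-1)$-st iterate is merely \emph{some} vector $a_1\vec e_1+a_2\vec e_2+a_3\vec e_3$.

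\textbf{Your route.} You adapt the frame to $\vec x$ so that $x_3=0$, treat $\vec x\parallel\vec n$ separately, and import the planar induction of Proposition~1. This also yields the closed form $a_p+ib_p=(x_1+ix_2)^p$.

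\textbf{What each approach buys.} Your choice buys more than brevity. In $\mathbb{R}^3$ the paper's weak hypothesis does not by itself close the induction. The product $(\vec x\diamond\vec e_1)\diamond(a_1\vec e_1+a_2\vec e_2+a_3\vec e_3)$ has trivector part $(x_3a_2-x_2a_3)\,\vec e_1\wedge\vec e_2\wedge\vec e_3$. This vanishes only because the iterate actually lies in $\mathrm{span}\{\vec n,\vec x\}$. Your hypothesis $a\vec e_1+b\vec e_2$ encodes exactly this stronger invariant, and it makes your argument dimension-free. It is the same invariant the paper later uses in Theorem~\ref{my-Thm}, where the iterate is written as $a\vec n+b(\vec x\wedge\vec n)\diamond\vec n$ and $(\vec x\wedge\vec n)\diamond\vec n=\vec x_\perp$. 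The paper's fixed frame, in turn, gives coordinates independent of $\vec x$ and displays the trivector cancellation explicitly.

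\textbf{Minor points.} Your fixed-basis cross-check via $B\diamond B=-(x_2^2+x_3^2)$ is sound and is essentially the paper's general argument. However, what cancels there are the $\vec e_2\diamond\vec e_3$ cross terms in $B\diamond B$, not trivectors. Trivectors arise only when $\vec x\diamond\vec e_1$ is multiplied onto an arbitrary vector, as in the paper. Finally, your opening remark on the invertibility of right multiplication by $\vec n$ plays no role in the argument and can be dropped.
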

\begin{proof}
Without loss of generality, we choose an orthonormal basis
$\{\vec e_1,\vec e_2,\vec e_3\}$ and take $\vec n=\vec e_1$.

Let
\[
\vec x = x_1\vec e_1 + x_2\vec e_2 + x_3\vec e_3.
\]

\textbf{Case $p=1$.}
We have
\[
(\vec x\diamond \vec e_1)\diamond \vec e_1
=
\vec x\diamond (\vec e_1\diamond \vec e_1)
=
\vec x,
\]
so the claim holds.

\medskip

\textbf{Case $p=2$.}
Thinking and acting in the same way in $\mathbb{R}^{3}$ we have
\begin{align*}
\left( \vec{x} \diamond \vec{e}_{1} \right)^{2} \diamond \vec{e}_{1}
&= \left( \vec{x} \diamond \vec{e}_{1} \right) \diamond 
   \left( \vec{x} \diamond \vec{e}_{1} \right) \diamond \vec{e}_{1} \\[4pt]
&= \left( \vec{x} \diamond \vec{e}_{1} \diamond \vec{x} \right)
   \diamond \left( \vec{e}_{1} \diamond \vec{e}_{1} \right) \\[4pt]
&= \left( \vec{x} \diamond \vec{e}_{1} \diamond \vec{x} \right)
   \left( \vec{e}_{1} \cdot \vec{e}_{1} + \vec{e}_{1} \wedge \vec{e}_{1} \right) \\[4pt]
&= \left( \vec{x} \diamond \vec{e}_{1} \diamond \vec{x} \right) .\\
& \text{(Since $\vec{e}_{1} \cdot \vec{e}_{1} = 1$ and $\vec{e}_{1} \wedge \vec{e}_{1} = 0$ )}\\[4pt]
%5\end{align*}
%\begin{align*}
%\text{Since } \vec{e}_{1} \cdot \vec{e}_{1} &= 1 
%\quad \text{and} \quad 
%\vec{e}_{1} \wedge \vec{e}_{1} = 0, \\[6pt]
&= \left( 
x_{1}\vec{e}_{1} + x_{2}\vec{e}_{2} + x_{3}\vec{e}_{3}
\right)
\diamond \vec{e}_{1}
\diamond 
\left(
x_{1}\vec{e}_{1} + x_{2}\vec{e}_{2} + x_{3}\vec{e}_{3}
\right) \\[6pt]
&= \Big[
x_{1}\left( \vec{e}_{1} \diamond \vec{e}_{1} \right)
+ x_{2}\left( \vec{e}_{2} \diamond \vec{e}_{1} \right)
+ x_{3}\left( \vec{e}_{3} \diamond \vec{e}_{1} \right)
\Big] \diamond
\left(
x_{1}\vec{e}_{1} + x_{2}\vec{e_2} + x_{3}\vec{e_3}
\right).\\[4pt]
& \text{(Since $\vec{e}_{1} \cdot \vec{e}_{1} = 1$ and $\vec{e_1} \wedge \vec{e_1} = 0$)}\\[4pt]
%\end{align*}
%\begin{align*}
&= \left(
x_{1}\vec{e}_{1} + x_{2}\vec{e}_{2} + x_{3}\vec{e}_{3}
\right)
\diamond \vec{e}_{1}
\diamond
\left(
x_{1}\vec{e}_{1} + x_{2}\vec{e}_{2} + x_{3}\vec{e}_{3}
\right) \\[6pt]
&= \Big[
x_{1}\left( \vec{e}_{1}\diamond \vec{e}_{1} \right)
+ x_{2}\left( \vec{e}_{2}\diamond \vec{e}_{1} \right)
+ x_{3}\left( \vec{e}_{3}\diamond \vec{e}_{1} \right)
\Big]
\diamond
\left(
x_{1}\vec{e}_{1} + x_{2}\vec{e}_{2} + x_{3}\vec{e}_{3}
\right) \\[6pt]
&= \Bigg\{ \Big[
x_{1}\left( \vec{e}_{1}\cdot \vec{e}_{1} + \vec{e}_{1}\wedge \vec{e}_{1} \right)
\Big] \\
&\quad + \Big[
x_{2}\left( \vec{e}_{2}\cdot \vec{e}_{1} + \vec{e}_{2}\wedge \vec{e}_{1} \right)
\Big] \\
&\quad + \Big[
x_{3}\left( \vec{e}_{3}\cdot \vec{e}_{1} + \vec{e}_{3}\wedge \vec{e}_{1} \right)
\Big]
\Bigg \}
\diamond
\left(
x_{1}\vec{e}_{1} + x_{2}\vec{e}_{2} + x_{3}\vec{e}_{3}
\right) \\[6pt]
&= \Big[
x_{1}
+ x_{2}\left( \vec{e}_{2}\wedge \vec{e}_{1} \right)
+ x_{3}\left( \vec{e}_{3}\wedge \vec{e}_{1} \right)
\Big]
\diamond
\left(
x_{1}\vec{e}_{1} + x_{2}\vec{e}_{2} + x_{3}\vec{e}_{3}
\right). \\[4pt]
&\text{(but $\vec{e}_{2}\cdot \vec{e}_{1}=\vec{e_{3}}\cdot \vec{e}_{1}=0$ and $\vec{e}_{1}\cdot \vec{e}_{1}=1$) }
\end{align*}
So,
\begin{align*}
\left(\vec{x}\diamond\vec{e_1}\right)^2\diamond\vec{e_1}&= x_{1}\Big[
x_{1}+x_{2}\left( \vec{e}_{2}\wedge \vec{e}_{1} \right)
+ x_{3}\left( \vec{e}_{3}\wedge \vec{e}_{1} \right)
\Big]\diamond \vec{e}_{1} \\[4pt]
&\quad + x_{2}\Big[
x_{1}+x_{2}\left( \vec{e}_{2}\wedge \vec{e}_{1} \right)
+ x_{3}\left( \vec{e}_{3}\wedge \vec{e}_{1} \right)
\Big]\diamond \vec{e}_{2} \\[4pt]
&\quad + x_{3}\Big[
x_{1}+x_{2}\left( \vec{e}_{2}\wedge \vec{e}_{1} \right)
+ x_{3}\left( \vec{e}_{3}\wedge \vec{e}_{1} \right)
\Big]\diamond \vec{e}_{3} \\[8pt]
&= \Big[
x_{1}^{2}
+ x_{1}x_{2}\left( \vec{e}_{2}\wedge \vec{e}_{1} \right)
+ x_{1}x_{3}\left( \vec{e}_{3}\wedge \vec{e}_{1} \right)
\Big]\diamond \vec{e}_{1} \\[4pt]
&\quad + \Big[
x_{1}x_{2}
+ x_{2}^{2}\left( \vec{e}_{2}\wedge \vec{e}_{1} \right)
+ x_{2}x_{3}\left( \vec{e}_{3}\wedge \vec{e}_{1} \right)
\Big]\diamond \vec{e}_{2} \\[4pt]
&\quad + \Big[
x_{1}x_{3}
+ x_{2}x_{3}\left( \vec{e}_{2}\wedge \vec{e}_{1} \right)
+ x_{3}^{2}\left( \vec{e}_{3}\wedge \vec{e}_{1} \right)
\Big]\diamond \vec{e}_{3} \\[4pt]
&= x_{1}^{2}\vec{e}_{1}
+ x_{1}x_{2}\left( \vec{e}_{2}\wedge \vec{e}_{1} \right)\diamond \vec{e}_{1}
+ x_{1}x_{3}\left( \vec{e}_{3}\wedge \vec{e}_{1} \right)\diamond \vec{e}_{1} \\[6pt]
&\quad + x_{1}x_{2}\vec{e}_{2}
- x_{2}^{2}\left( \vec{e}_{1}\wedge \vec{e}_{2} \right)\diamond \vec{e}_{2}
+ x_{2}x_{3}\left( \vec{e}_{3}\wedge \vec{e}_{1} \right)\diamond \vec{e}_{2} \\[6pt]
&\quad + x_{1}x_{3}\vec{e}_{3}
+ x_{2}x_{3}\left( \vec{e}_{2}\wedge \vec{e}_{1} \right)\diamond \vec{e}_{3}
+ x_{3}^{2}\left( \vec{e}_{3}\wedge \vec{e}_{1} \right)\diamond \vec{e}_{3}. \\[4pt]
&= \Big[
x_{1}^{2}\vec{e}_{1}
+ x_{1}x_{2}\left( \vec{e}_{2}\wedge \vec{e}_{1} \right)\diamond \vec{e}_{1}
+ x_{1}x_{3}\vec{e}_{3}
\Big] \\[6pt]
&\quad + \Big[
x_{1}x_{2}\vec{e}_{2}
- x_{2}^{2}\left( \vec{e}_{1}\wedge \vec{e}_{2} \right)\diamond \vec{e}_{2}
+ x_{2}x_{3}\left( \vec{e}_{3}\wedge \vec{e}_{1} \right)\diamond \vec{e}_{2}
\Big] \\[6pt]
&\quad + \Big[
x_{1}x_{3}\vec{e}_{3}
+ x_{2}x_{3}\left( \vec{e}_{2}\wedge \vec{e}_{1} \right)\diamond \vec{e}_{3}
- x_{3}^{2}\left( \vec{e}_{1}\wedge \vec{e}_{3} \right)\diamond \vec{e}_{3}
\Big].
\end{align*}

\textbf{Remark.} We use the identities:
\begin{align*}
\left( \vec{e}_{i} \wedge \vec{e}_{j} \right) \diamond \vec{e}_{j}
&= \vec{e}_{i}, \\[4pt]
\vec{e}_{i} \wedge \vec{e}_{j}
&= - \vec{e}_{j} \wedge \vec{e}_{i}, \\[4pt]
\left( \vec{e}_{i} \wedge \vec{e}_{j} \right) \diamond \vec{e}_{k}
&= \left( \vec{e}_{i} \wedge \vec{e}_{j} \right) \cdot \vec{e}_{k}
+ \left( \vec{e}_{i} \wedge \vec{e}_{j} \right) \wedge \vec{e}_{k}.
\end{align*}
And next we have:
\begin{align*}
(\vec{x}\diamond\vec{e_1})^2\diamond \vec{e_1}&= \Big[
x_{1}^{2}\vec{e}_{1}
+ x_{1}x_{2}\vec{e}_{2}
+ x_{1}x_{3}\vec{e}_{3}
\Big] \\[4pt]
&\quad + \Big[
x_{1}x_{2}\vec{e}_{2}
- x_{2}^{2}\vec{e}_{1}
+ x_{2}x_{3}\left( \vec{e}_{3}\wedge \vec{e}_{1} \right)\diamond \vec{e}_{2}
\Big] \\[4pt]
&\quad + \Big[
x_{1}x_{3}\vec{e}_{3}
+ x_{2}x_{3}\left( \vec{e}_{2}\wedge \vec{e}_{1} \right)\diamond \vec{e}_{3}
- x_{3}^{2}\vec{e}_{1}
\Big] \\[8pt]
&= x_{1}^{2}\vec{e}_{1}
+ x_{1}x_{2}\vec{e}_{2}
+ x_{1}x_{3}\vec{e}_{3}
+ x_{1}x_{2}\vec{e}_{2}
- x_{2}^{2}\vec{e}_{1} \\[4pt]
&\quad + x_{2}x_{3}\left( \vec{e}_{3}\wedge \vec{e}_{1} \right)\diamond \vec{e}_{2} \\[4pt]
&\quad + x_{1}x_{3}\vec{e}_{3}
+ x_{2}x_{3}\left( \vec{e}_{2}\wedge \vec{e}_{1} \right)\diamond \vec{e}_{3}
- x_{3}^{2}\vec{e}_{1} \\[4pt]
%\end{align*}
%\begin{align*}
&= \left( x_{1}^{2}-x_{2}^{2}-x_{3}^{2} \right)\vec{e}_{1}
+ \left( x_{1}x_{2}+x_{1}x_{2} \right)\vec{e}_{2}
+ \left( x_{1}x_{3}+x_{1}x_{3} \right)\vec{e}_{3} \\[4pt]
&\quad + x_{2}x_{3}
\Big[
\left( \vec{e}_{3}\wedge \vec{e}_{1} \right)\diamond \vec{e}_{2}
+ \left( \vec{e}_{2}\wedge \vec{e}_{1} \right)\diamond \vec{e}_{3}
\Big] \\[8pt]
&= \left( x_{1}^{2}-x_{2}^{2}-x_{3}^{2} \right)\vec{e}_{1}
+ 2x_{1}x_{2}\vec{e}_{2}
+ 2x_{1}x_{3}\vec{e}_{3} \\[6pt]
&\quad + x_{2}x_{3}
\Big[
\left( \vec{e}_{2}\cdot \vec{e}_{3} \right)\vec{e}_{1}
- \left( \vec{e}_{2}\cdot \vec{e}_{1} \right)\vec{e}_{3}
+ \vec{e}_{3}\wedge \vec{e}_{1}\wedge \vec{e}_{2} \\[4pt]
&\qquad + \left( \vec{e}_{3}\cdot \vec{e}_{2} \right)\vec{e}_{1}
- \left( \vec{e}_{3}\cdot \vec{e}_{1} \right)\vec{e}_{2}
+ \vec{e}_{2}\wedge \vec{e}_{1}\wedge \vec{e}_{3}
\Big] \\[8pt]
&= \left( x_{1}^{2}-x_{2}^{2}-x_{3}^{2} \right)\vec{e}_{1}
+ 2x_{1}x_{2}\vec{e}_{2}
+ 2x_{1}x_{3}\vec{e}_{3} \\[6pt]
&\quad + x_{2}x_{3}
\Big[
\vec{e}_{3}\wedge \vec{e}_{1}\wedge \vec{e}_{2}
+ \vec{e}_{2}\wedge \vec{e}_{1}\wedge \vec{e}_{3}
\Big].
\end{align*}
But we have that
\begin{align*}
\vec{e}_{3} \wedge \vec{e}_{1} \wedge \vec{e}_{2}
&= - \vec{e}_{2} \wedge \vec{e}_{1} \wedge \vec{e}_{3}, \\[6pt]
&\text{and hence the expression reduces to } \\[6pt]
&= \left( x_{1}^{2}-x_{2}^{2}-x_{3}^{2} \right)\vec{e}_{1}
+ 2x_{1}x_{2}\vec{e}_{2}
+ 2x_{1}x_{3}\vec{e}_{3}.
\end{align*}
which is a vector $\mathbb{R}^3$.

\textbf{Inductive step.}
Assume that for some $p\ge 2$,
\[
(\vec x\diamond \vec e_1)^{p-1}\diamond \vec e_1 \in \mathbb{R}^3.
\]

Since any vector in $\mathbb{R}^3$ can be written as
\[
\vec v = v_1\vec e_1 + v_2\vec e_2 + v_3\vec e_3,
\]
we write
\[
(\vec x\diamond \vec e_1)^{p-1}\diamond \vec e_1
=
a_1\vec e_1 + a_2\vec e_2 + a_3\vec e_3.
\]

Then
\begin{align*}
(\vec x\diamond \vec e_1)^p\diamond \vec e_1
&=
(\vec x\diamond \vec e_1)\diamond (a_1\vec e_1 + a_2\vec e_2 + a_3\vec e_3).
\end{align*}

Now use
\[
\vec x\diamond \vec e_1
=
x_1 + x_2(\vec e_2\wedge \vec e_1) + x_3(\vec e_3\wedge \vec e_1).
\]

Expanding term by term, we get:
\begin{itemize}
\item  scalar $\cdot$ vector gives vector,
\item  bivector $\diamond \vec e_1$ gives vector,
\item  and all mixed products reduce via identities
\[
(\vec e_i\wedge \vec e_1)\diamond \vec e_1 = \vec e_i.
\]
\end{itemize}
Hence every term in the expansion is a linear combination of
$\vec e_1,\vec e_2,\vec e_3$, so:
\[
(\vec x\diamond \vec e_1)^p\diamond \vec e_1 \in \mathbb{R}^3.
\]

Therefore, by induction, the result holds for all $p\ge 1$.
\end{proof}
\paragraph{Concluding remark.}
The explicit computations in $\mathbb{R}^2$ and $\mathbb{R}^3$ indicate that, although intermediate steps of the Clifford Julia operator involve scalar, vector, and bivector contributions, the full iteration exhibits a consistent closure property within the vector subspace. This occurs due to systematic cancellations governed by the projection–rejection structure relative to the unit vector $\vec n$. These low-dimensional cases therefore provide the essential algebraic insight underlying the general invariance result in arbitrary dimension.

\section{Algebraic Mechanism and Main Invariance Result}

This section establishes the algebraic mechanism underlying the vector invariance of the Clifford Julia iteration. We first identify the grade-reduction property induced by multiplication with a unit vector, and then derive the structural closure of the iteration in arbitrary dimensions.

To generalize Julia sets to $n$-dimensional spaces, we exploit the isomorphism between complex numbers and the oriented plane determined by a unit vector $\vec{n}$ in Geometric Algebra. Replacing the complex variable $z$ with the vector operator $\vec{x} \diamond \vec{n}$, the classical iteration
\[
z \mapsto z^p + c
\]
is extended to the Clifford setting as
\[
\vec{x}_{k+1} = (\vec{x}_k \diamond \vec{n})^p \diamond \vec{n} + \vec{c}.
\]

A central theoretical question is whether this nonlinear operation preserves the vector subspace $V$, or whether the geometric product generates higher-grade multivectors (bivectors, trivectors, etc.), thereby destroying the geometric interpretation of the iterated points.

\begin{lemma}[Grade Reduction Mechanism]
Let $V$ be an $n$-dimensional real inner-product vector space and let $Cl(V)$ be its associated Clifford algebra. Let $\vec x,\vec n \in V$ with $\vec n \diamond \vec n = 1$.

Then the bivector component generated by the geometric product satisfies
\[
(\vec x \wedge \vec n)\diamond \vec n \in V.
\]

More precisely,
\[
(\vec x \wedge \vec n)\diamond \vec n
=
\vec x - (\vec x \cdot \vec n)\vec n,
\]
which is a vector in $V$.
\end{lemma}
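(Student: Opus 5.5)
Since $\vec n\diamond\vec n=1$, right multiplication by $\vec n$ is invertible in $Cl(V)$. I will isolate $\vec x\wedge\vec n$ from the defining identity of the geometric product, multiply on the right by $\vec n$, and use associativity so that $\vec n\diamond\vec n$ collapses to $1$. No basis expansion is needed, so the argument holds for any $n$.

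\textbf{Step 1.} From $\vec x\diamond\vec n=\vec x\cdot\vec n+\vec x\wedge\vec n$ I get
\[
\vec x\wedge\vec n=\vec x\diamond\vec n-(\vec x\cdot\vec n),
\]
where $\vec x\cdot\vec n$ is a real scalar.

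\textbf{Step 2.} I multiply on the right by $\vec n$. The product $\diamond$ is associative and bilinear, and scalars commute with everything. Hence
\[
(\vec x\wedge\vec n)\diamond\vec n=\vec x\diamond(\vec n\diamond\vec n)-(\vec x\cdot\vec n)\vec n=\vec x-(\vec x\cdot\vec n)\vec n.
\]
This is the same move used in the case $p=1$ of the two propositions for $\mathbb{R}^2$ and $\mathbb{R}^3$.

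\textbf{Step 3.} The right-hand side is a linear combination of $\vec x$ and $\vec n$, so it lies in $V$. It is the rejection of $\vec x$ from $\vec n$, and it is orthogonal to $\vec n$, since $(\vec x-(\vec x\cdot\vec n)\vec n)\cdot\vec n=\vec x\cdot\vec n-\vec x\cdot\vec n=0$.

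As a cross-check I will also run the grade-based route. The standard identity $(\vec a\wedge\vec b)\diamond\vec c=(\vec a\wedge\vec b)\cdot\vec c+\vec a\wedge\vec b\wedge\vec c$ gives a trivector part $\vec x\wedge\vec n\wedge\vec n=0$. It gives a vector part $(\vec x\wedge\vec n)\cdot\vec n=\vec x(\vec n\cdot\vec n)-\vec n(\vec x\cdot\vec n)$, which agrees with Step 2.

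\textbf{Main obstacle.} The only delicate point is justifying associativity and the $\vec n\diamond\vec n=1$ collapse in full generality. I will rely on the standard fact that $Cl(V)$ is an associative algebra generated by $V$ subject to $\vec v\diamond\vec v=\vec v\cdot\vec v$. With that in hand, Step 2 is immediate and the proof needs no coordinates.
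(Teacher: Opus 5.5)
Your proof is correct and matches the paper's argument: split $\vec x\diamond\vec n=(\vec x\cdot\vec n)+(\vec x\wedge\vec n)$, right-multiply by $\vec n$, and use associativity with $\vec n\diamond\vec n=1$ to collapse $(\vec x\diamond\vec n)\diamond\vec n$ to $\vec x$. Your grade-based cross-check via $(\vec x\wedge\vec n)\cdot\vec n=\vec x(\vec n\cdot\vec n)-\vec n(\vec x\cdot\vec n)$ and the orthogonality observation are correct extras that the lemma does not require.
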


\begin{proof}
Using the decomposition of the geometric product,
\[
\vec x \diamond \vec n
=
(\vec x \cdot \vec n) + (\vec x \wedge \vec n),
\]
we multiply on the right by $\vec n$:
\begin{align*}
(\vec x \diamond \vec n)\diamond \vec n
&=
(\vec x \cdot \vec n)\vec n
+
(\vec x \wedge \vec n)\diamond \vec n.
\end{align*}

Rearranging yields
\[
(\vec x \wedge \vec n)\diamond \vec n
=
(\vec x \diamond \vec n)\diamond \vec n
-
(\vec x \cdot \vec n)\vec n.
\]

Since $\vec n \diamond \vec n = 1$, we obtain
\[
(\vec x \diamond \vec n)\diamond \vec n = \vec x,
\]
and thus
\[
(\vec x \wedge \vec n)\diamond \vec n
=
\vec x - (\vec x \cdot \vec n)\vec n \in V.
\]
\end{proof}

\begin{proposition}[Structural Closure in $\mathbb{R}^n$]
Let $\mathbb{R}^n$ be equipped with an orthonormal basis
$\{\vec e_1,\dots,\vec e_n\}$ and let $\vec n = \vec e_1$.
For every vector $\vec x \in \mathbb{R}^n$ and every integer $p \ge 1$,
the Clifford Julia iterate
\[
(\vec x \diamond \vec n)^p \diamond \vec n
\]
is a vector in $\mathbb{R}^n$.
In particular,
\[
(\vec x \diamond \vec n)^p \diamond \vec n \in \langle Cl(\mathbb{R}^n)\rangle_1.
\]
\end{proposition}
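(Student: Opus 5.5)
The plan is to avoid expanding in coordinates and to reduce the problem to a commutative two-dimensional subalgebra of $Cl(\mathbb{R}^n)$ that behaves like $\mathbb{C}$. I do \emph{not} intend to induct by multiplying a vector on the left by $\vec x\diamond\vec n$, as in the inductive steps of the low-dimensional propositions. For a general vector $\vec v$, the product $(\vec x\diamond\vec n)\diamond\vec v$ contains a trivector part $(\vec x\wedge\vec n)\wedge\vec v$. So the induction hypothesis has to keep track of more than membership in $V$.

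First, decompose $\vec x = (\vec x\cdot\vec n)\vec n + \vec x_\perp$, where $\vec x_\perp=\vec x-(\vec x\cdot\vec n)\vec n$ is orthogonal to $\vec n$. Set $a=\vec x\cdot\vec n$ and $B=\vec x\wedge\vec n=\vec x_\perp\wedge\vec n=\vec x_\perp\diamond\vec n$, where the last equality uses $\vec x_\perp\cdot\vec n=0$. Then $\vec x\diamond\vec n=a+B$. The key computation is that $B$ squares to a scalar. Since $\vec x_\perp$ and $\vec n$ are orthogonal, they anticommute, so
\[
B\diamond B=\vec x_\perp\diamond\vec n\diamond\vec x_\perp\diamond\vec n=-\vec x_\perp\diamond\vec x_\perp\diamond\vec n\diamond\vec n=-|\vec x_\perp|^2 .
\]
Hence $\mathrm{span}_{\mathbb{R}}\{1,B\}$ is closed under the geometric product and is commutative. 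When $\vec x_\perp\neq 0$, the rescaled element $B/|\vec x_\perp|$ plays the role of $i$.

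Next, prove by induction on $p\ge1$ that $(\vec x\diamond\vec n)^p=\alpha_p+\beta_p B$ for real scalars $\alpha_p,\beta_p$. The base case is $\alpha_1=a$, $\beta_1=1$. The step follows from
\[
(\alpha_p+\beta_pB)\diamond(a+B)=(a\alpha_p-|\vec x_\perp|^2\beta_p)+(\alpha_p+a\beta_p)B,
\]
which is exactly complex multiplication. This is the precise form of the ``inductive closure'' the paper announces.

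Finally, multiply on the right by $\vec n$ and use the Grade Reduction Lemma:
\[
(\vec x\diamond\vec n)^p\diamond\vec n=\alpha_p\vec n+\beta_p(\vec x\wedge\vec n)\diamond\vec n=\alpha_p\vec n+\beta_p\bigl(\vec x-(\vec x\cdot\vec n)\vec n\bigr)\in V=\langle Cl(\mathbb{R}^n)\rangle_1 .
\]
This also shows that the iterate lies in the plane $\mathrm{span}\{\vec n,\vec x\}$. The choice $\vec n=\vec e_1$ is not actually used; it only connects the result to the earlier coordinate computations.

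The main obstacle is the identity $B^2=-|\vec x_\perp|^2$. It requires rewriting the outer product $\vec x\wedge\vec n$ as the geometric product $\vec x_\perp\diamond\vec n$ and invoking anticommutation of orthogonal vectors. This is exactly what fails for a general bivector in dimension $n\ge4$, where $B^2$ can have a grade-4 part. Simplicity of $B$ is therefore the essential point, and I will state it explicitly. The degenerate case $\vec x_\perp=0$ is trivial: then $B=0$ and the iterate equals $a^p\vec n$.
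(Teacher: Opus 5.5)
Your argument is correct and is essentially the paper's own. The paper's inductive hypothesis $(\vec x\diamond\vec n)^{p-1}\diamond\vec n=a_p\vec n+b_p(\vec x\wedge\vec n)\diamond\vec n$ is exactly your form $\alpha+\beta B$ for $(\vec x\diamond\vec n)^{p-1}$, multiplied on the right by $\vec n$; both inductions rest on the same two facts (that $(\vec x\wedge\vec n)^2$ is a scalar, and the Grade Reduction Lemma) and produce the same complex-multiplication recurrence, while you additionally justify $(\vec x\wedge\vec n)^2=-|\vec x_\perp|^2$ via $\vec x\wedge\vec n=\vec x_\perp\diamond\vec n$ and anticommutation, which the paper only asserts, so the coordinate computation for $p=2$ becomes unnecessary.
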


\begin{proof}
We proceed by induction on $p$.

\textbf{Step $p=1$.}
We compute directly
\[
(\vec x \diamond \vec n)\diamond \vec n
=
(\vec x \cdot \vec n)\vec n
+
(\vec x \wedge \vec n)\diamond \vec n.
\]
Since $\vec n^2 = 1$, we have
\[
(\vec x \diamond \vec n)\diamond \vec n = \vec x,
\]
hence the statement holds for $p=1$.

\textbf{Step $p=2$.}
This case has been verified explicitly in Lemma/previous computation,
showing that
\[
(\vec x \diamond \vec n)^2 \diamond \vec n \in \mathbb{R}^n.
\]

Moreover, the computation reveals the key structural fact:
after one multiplication by $\vec n$, all multivector contributions
reduce either to a vector parallel to $\vec n$ or to a vector orthogonal
to it.

We can use the result shown earlier that:
\[
(\vec x \diamond \vec n)^2 \diamond \vec n=
\vec{x}\diamond \vec n \diamond \vec{x}
\]
consider the general case in $\mathbb{R}^n$ with orthonormal basis 
$\{\vec e_1,\dots,\vec e_n\}$ and take $\vec n=\vec e_1$.

We compute
\begin{align*}
\vec{x}\diamond \vec{e}_{1}\diamond \vec{x}
&=
\left( \sum_{k=1}^{n} x_{k}\vec{e}_{k} \right)
\diamond \vec{e}_{1}
\diamond
\left( \sum_{m=1}^{n} x_{m}\vec{e}_{m} \right) \\[6pt]
&=
\left(
x_{1}\vec{e}_{1}
+
\sum_{k>1} x_{k}\vec{e}_{k}
\right)
\diamond \vec{e}_{1}
\diamond
\left( \sum_{m=1}^{n} x_{m}\vec{e}_{m} \right) \\[6pt]
&=
x_{1}
\sum_{m=1}^{n}
x_{m}\vec{e}_{m}
+
\sum_{k>1,m=1}^{n}
x_{k}x_{m}
\vec{e}_{k}\diamond \vec{e}_{1}\diamond \vec{e}_{m}.
\end{align*}

Separating the diagonal and mixed terms gives
\begin{align*}
\vec{x}\diamond \vec{e}_{1}\diamond \vec{x}
&=
x_{1}^{2}\vec{e}_{1}
+
x_{1}\sum_{k>1}x_{k}\vec{e}_{k} \\[4pt]
&\quad
+
\sum_{k>1}
x_{k}^{2}
\vec{e}_{k}\diamond \vec{e}_{1}\diamond \vec{e}_{k} \\[4pt]
&\quad
+
\sum_{\substack{k,m>1\\k\neq m}}
x_{k}x_{m}
\vec{e}_{k}\diamond \vec{e}_{1}\diamond \vec{e}_{m}.
\end{align*}

Since
\[
\vec e_k\diamond \vec e_1
=
-
\vec e_1\diamond \vec e_k,
\qquad k>1,
\]
we obtain
\[
\vec e_k\diamond \vec e_1\diamond \vec e_k
=
-\vec e_1.
\]

Hence the diagonal contribution becomes
\[
-\sum_{k>1}x_k^2\vec e_1.
\]

For the mixed terms,
\[
\vec e_k\diamond \vec e_1\diamond \vec e_m
=
-
\vec e_m\diamond \vec e_1\diamond \vec e_k,
\]
so the corresponding terms cancel pairwise in the summation.

Therefore,
\begin{align*}
\vec{x}\diamond \vec{e}_{1}\diamond \vec{x}
&=
\left(
x_{1}^{2}
-
\sum_{k>1}x_{k}^{2}
\right)\vec e_1
+
2\sum_{k>1}x_{1}x_{k}\vec e_k.
\end{align*}

Thus
\[
\vec{x}\diamond \vec{e}_{1}\diamond \vec{x}
\in \mathbb R^n,
\]
which proves that the Clifford product reduces again to a vector.

%Hence,
%\[
%\vec{x}\diamond \vec{e}_1 \diamond \vec{x} \in \mathbb{R}^n,
%\]
%i.e. it is a vector.

\textbf{Inductive hypothesis.}
Assume that for some $p \ge 2$ there exist scalars $a_p,b_p \in \mathbb{R}$
such that
\[
(\vec x \diamond \vec n)^{p-1} \diamond \vec n
=
a_p \vec n + b_p (\vec x \wedge \vec n)\diamond \vec n,
\]
where both terms lie in the vector subspace of $Cl(\mathbb{R}^n)$.

\textbf{Inductive step.}
We compute
\begin{align*}
(\vec x \diamond \vec n)^p \diamond \vec n
&=
(\vec x \diamond \vec n)
\diamond
\big[(\vec x \diamond \vec n)^{p-1} \diamond \vec n\big] \\[4pt]
&=
(\vec x \diamond \vec n)
\diamond
\big[a_p \vec n + b_p (\vec x \wedge \vec n)\diamond \vec n\big] \\[4pt]
&=
a_p (\vec x \diamond \vec n)\diamond \vec n
+
b_p (\vec x \diamond \vec n)\diamond (\vec x \wedge \vec n)\diamond \vec n.
\end{align*}

Using the decomposition
\[
\vec x \diamond \vec n = (\vec x \cdot \vec n) + (\vec x \wedge \vec n),
\]
we expand the second term:
\begin{align*}
(\vec x \diamond \vec n)\diamond (\vec x \wedge \vec n)\diamond \vec n
&=
(\vec x \cdot \vec n)(\vec x \wedge \vec n)\diamond \vec n
+
(\vec x \wedge \vec n)\diamond (\vec x \wedge \vec n)\diamond \vec n.
\end{align*}

Now we use the structural identities of Clifford algebra:
\begin{itemize}
	\item  $(\vec x \cdot \vec n)\vec n \in \mathbb{R}^n$, 
	\item  $(\vec x \wedge \vec n)\diamond \vec n \in \mathbb{R}^n$,
	\item  $(\vec x \wedge \vec n)^2 \in \mathbb{R}$ (scalar),
	\item  scalar multiples of vectors remain vectors.
\end{itemize}
Hence every term in the expansion is either:
\begin{description}
	\item[i] a scalar multiple of $\vec n$, or
	\item[ii] a vector in $\mathbb{R}^n$ generated from $(\vec x \wedge \vec n)\diamond \vec n$.
\end{description}

Therefore the whole expression can be regrouped as
\[
(\vec x \diamond \vec n)^p \diamond \vec n
=
A_p \vec n
+
B_p (\vec x \wedge \vec n)\diamond \vec n,
\]
for some scalars $A_p, B_p \in \mathbb{R}$.

Thus it remains in the vector subspace $\mathbb{R}^n$.

\textbf{Conclusion.}
By induction, for all $p \ge 1$,
\[
(\vec x \diamond \vec n)^p \diamond \vec n \in \mathbb{R}^n.
\]
\end{proof}

This coordinate computation suggests a deeper algebraic mechanism, which is formalized in the following lemma.

\begin{lemma}[Grade Structure of the Clifford Julia Operator]

Let $V$ be an $n$-dimensional real inner-product vector space,
and let $Cl(V)$ denote its associated Clifford algebra.

Let
\[
\vec x,\vec n \in V,
\qquad
\vec n\diamond\vec n =1.
\]

Then
\[
\vec x\diamond\vec n
=
\vec x\cdot\vec n
+
\vec x\wedge\vec n
\]
contains only scalar and bivector components.

Moreover,
\[
(\vec x\wedge\vec n)\diamond\vec n
\in
\langle Cl(V)\rangle_1.
\]

Hence, multiplication by $\vec n$ maps the bivector contribution
back into the vector subspace, providing the algebraic mechanism
underlying the vector invariance of the Clifford Julia iteration.

\end{lemma}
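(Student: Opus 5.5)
We prove the two assertions of the lemma separately. Both rest on the defining identity $\vec a\diamond\vec b=\vec a\cdot\vec b+\vec a\wedge\vec b$ for vectors $\vec a,\vec b\in V$, together with the grade conventions of $Cl(V)$.

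\textbf{Grades of $\vec x\diamond\vec n$.} The inner product $\vec x\cdot\vec n$ is a real number, so it lies in $\langle Cl(V)\rangle_0$. The outer product $\vec x\wedge\vec n$ is a bivector, so it lies in $\langle Cl(V)\rangle_2$. We will make this concrete by expanding $\vec x=\sum_k x_k\vec e_k$ in an orthonormal basis. Then
\[
\vec x\wedge\vec n=\sum_{k<m}(x_kn_m-x_mn_k)\,\vec e_k\wedge\vec e_m ,
\]
which exhibits $\vec x\wedge\vec n$ as a combination of the basis bivectors. Hence $\vec x\diamond\vec n$ has only grade-$0$ and grade-$2$ parts. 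If $\vec x\parallel\vec n$, the bivector part is $0$; this is still consistent with the claim.

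\textbf{Grade reduction.} For the second assertion we invoke the Grade Reduction Mechanism lemma proved above, which gives
\[
(\vec x\wedge\vec n)\diamond\vec n=\vec x-(\vec x\cdot\vec n)\vec n .
\]
Its proof uses only associativity of the geometric product and $\vec n\diamond\vec n=1$:
\[
(\vec x\diamond\vec n)\diamond\vec n=\vec x\diamond(\vec n\diamond\vec n)=\vec x .
\]
The right-hand side $\vec x-(\vec x\cdot\vec n)\vec n$ is a difference of two vectors, so it lies in $\langle Cl(V)\rangle_1$; geometrically, it is the rejection of $\vec x$ from $\vec n$. As a cross-check, we will note the general identity
\[
B\diamond\vec n=B\cdot\vec n+B\wedge\vec n
\]
for a bivector $B$. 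For $B=\vec x\wedge\vec n$ the trivector part $\vec x\wedge\vec n\wedge\vec n$ vanishes by antisymmetry. The remaining part contracts to $(\vec n\cdot\vec n)\vec x-(\vec x\cdot\vec n)\vec n$, which agrees with the formula above.

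\textbf{Concluding remark and main point of care.} The closing sentence of the lemma is interpretive: right multiplication by $\vec n$ sends the grade-$0$ part $\vec x\cdot\vec n$ to the vector $(\vec x\cdot\vec n)\vec n$, and the grade-$2$ part to the vector given above. Summing these recovers $\vec x$. The only step needing care is justifying that the grade-$3$ term drops out, that is, the vanishing of $\vec x\wedge\vec n\wedge\vec n$. The associativity route through the earlier lemma sidesteps this issue entirely, so we will use it as the primary argument.
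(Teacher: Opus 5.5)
Your proof is correct and follows essentially the same route as the paper: split $\vec x\diamond\vec n$ into its grade-$0$ part $\vec x\cdot\vec n$ and its grade-$2$ part $\vec x\wedge\vec n$, then use associativity and $\vec n\diamond\vec n=1$ to get $(\vec x\wedge\vec n)\diamond\vec n=\vec x-(\vec x\cdot\vec n)\vec n\in\langle Cl(V)\rangle_1$. The differences are only cosmetic: the paper re-derives this identity inline rather than citing the Grade Reduction Mechanism lemma, and it omits your basis expansion of $\vec x\wedge\vec n$ and your contraction cross-check, both of which are correct.
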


\begin{proof}

By the standard decomposition of the geometric product in
Clifford algebra, the product of two vectors splits into its
symmetric and antisymmetric parts:
\[
\vec x\diamond\vec n
=
\vec x\cdot\vec n
+
\vec x\wedge\vec n.
\]

Here,
\[
\vec x\cdot\vec n
\]
is the inner product of $\vec x$ and $\vec n$, hence it is a scalar
element of the grade-$0$ subspace of $Cl(V)$.

On the other hand,
\[
\vec x\wedge\vec n
\]
is antisymmetric and therefore belongs to the grade-$2$
subspace, i.e. it is a bivector.

Consequently,
\[
\vec x\diamond\vec n
\]
contains only scalar and bivector components.

We now examine how the bivector contribution behaves under
multiplication by the unit vector $\vec n$.

Starting from
\[
\vec x\diamond\vec n
=
\vec x\cdot\vec n
+
\vec x\wedge\vec n,
\]
we isolate the bivector part:
\[
\vec x\wedge\vec n
=
\vec x\diamond\vec n
-
\vec x\cdot\vec n.
\]

Multiplying on the right by $\vec n$ gives
\begin{align*}
(\vec x\wedge\vec n)\diamond\vec n
&=
(\vec x\diamond\vec n-\vec x\cdot\vec n)\diamond\vec n \\[4pt]
&=
(\vec x\diamond\vec n)\diamond\vec n
-
(\vec x\cdot\vec n)\vec n.
\end{align*}

Using associativity of the geometric product, we obtain
\[
(\vec x\diamond\vec n)\diamond\vec n
=
\vec x\diamond(\vec n\diamond\vec n).
\]

Since $\vec n$ is a unit vector,
\[
\vec n\diamond\vec n =1,
\]
and therefore
\[
\vec x\diamond(\vec n\diamond\vec n)
=
\vec x\diamond 1
=
\vec x.
\]

Substituting this identity into the previous relation yields
\[
(\vec x\wedge\vec n)\diamond\vec n
=
\vec x
-
(\vec x\cdot\vec n)\vec n.
\]

Now,
\[
\vec x\in V,
\]
and because
\[
\vec x\cdot\vec n
\]
is scalar, the product
\[
(\vec x\cdot\vec n)\vec n
\]
is again a vector in $V$.

Hence both terms on the right-hand side belong to the
grade-$1$ subspace of the Clifford algebra. Therefore,
\[
(\vec x\wedge\vec n)\diamond\vec n
\in
\langle Cl(V)\rangle_1.
\]

Thus, right multiplication by $\vec n$ transforms the bivector
contribution back into a vector. This grade-reduction mechanism
is the fundamental algebraic property underlying the vector
invariance of the Clifford Julia iteration.

\end{proof}

The previous lemma identifies the fundamental grade-reduction
mechanism induced by multiplication with $\vec n$. This property
serves as the algebraic foundation for the vector invariance
established in Theorem~\ref{my-Thm}.

\begin{theorem}[Vector Invariance of the Clifford Julia Iteration]
\label{my-Thm}

Let $V$ be an $n$-dimensional real inner-product vector space,
and let $Cl(V)$ denote its associated Clifford algebra.

Let
\[
\vec x,\vec n,\vec c\in V,
\]
where $\vec n$ is a unit vector satisfying
\[
\vec n\diamond\vec n=1.
\]

For every integer exponent $p\ge2$, consider the generalized
Clifford Julia operator
\[
f(\vec x)
=
(\vec x\diamond\vec n)^p\diamond\vec n+\vec c.
\]

Then
\[
f:V\to V
\]
is well-defined. More precisely,
\[
f(\vec x)\in \langle Cl(V)\rangle_1
\]
for every $\vec x\in V$.

Therefore, despite the intermediate multivector terms generated
by the geometric product, the iteration remains closed in the
grade-$1$ subspace of the Clifford algebra.

\end{theorem}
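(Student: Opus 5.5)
The strategy is to locate $\vec x\diamond\vec n$ inside a two-dimensional commutative subalgebra of $Cl(V)$ spanned by $1$ and the bivector $B:=\vec x\wedge\vec n$, show that every power $(\vec x\diamond\vec n)^p$ stays in that span, and then apply the Grade Reduction Lemma to the final right multiplication by $\vec n$. This avoids coordinates entirely and replaces the loosely phrased inductive step of the Structural Closure proposition by an explicit recurrence.

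\textbf{Step 1: $B^2$ is a scalar.} Write $a:=\vec x\cdot\vec n$, so that $\vec x\diamond\vec n=a+B$. From the symmetric/antisymmetric splitting we have $\vec n\diamond\vec x=a-B=2a-\vec x\diamond\vec n$. Using associativity together with $\vec x\diamond\vec x=|\vec x|^2$ and $\vec n\diamond\vec n=1$,
\[
(\vec x\diamond\vec n)\diamond(\vec x\diamond\vec n)=\vec x\diamond(\vec n\diamond\vec x)\diamond\vec n=2a\,\vec x\diamond\vec n-|\vec x|^2 .
\]
Expanding $(a+B)^2=a^2+2aB+B^2$ and comparing the two expressions gives
\[
B^2=a^2-|\vec x|^2\in\mathbb R .
\]
This is the only genuinely algebraic input. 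I expect it to be the main point to get right, because the earlier proof merely asserted $(\vec x\wedge\vec n)^2\in\mathbb R$.

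\textbf{Step 2: induction on $p$.} I claim that for every $p\ge1$ there are real scalars $A_p,C_p$ with
\[
(\vec x\diamond\vec n)^p=A_p+C_pB .
\]
The base case is $A_1=a$, $C_1=1$. For the inductive step, scalars commute with everything, so
\[
(a+B)(A_p+C_pB)=\bigl(aA_p+C_p(a^2-|\vec x|^2)\bigr)+\bigl(A_p+aC_p\bigr)B .
\]
This yields the recurrence $A_{p+1}=aA_p+(a^2-|\vec x|^2)C_p$ and $C_{p+1}=A_p+aC_p$. In other words, $\mathrm{span}\{1,B\}$ is closed under multiplication, and it is a copy of $\mathbb C$ when $B\neq0$. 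That is exactly the complex-number picture behind the iteration.

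\textbf{Step 3: conclusion.} Multiply on the right by $\vec n$ and apply the Grade Reduction Lemma:
\[
(\vec x\diamond\vec n)^p\diamond\vec n=A_p\vec n+C_p\,(\vec x\wedge\vec n)\diamond\vec n=A_p\vec n+C_p\bigl(\vec x-a\vec n\bigr)\in\langle Cl(V)\rangle_1 .
\]
Adding $\vec c\in V$ preserves grade $1$, so $f(\vec x)\in V$ for every $\vec x\in V$ and $f:V\to V$ is well defined. Since the argument is uniform in $\vec x$, every iterate of the Clifford Julia iteration also stays in $V$. As a consistency check, $p=2$ gives $A_2=2a^2-|\vec x|^2$ and $C_2=2a$, which reproduces the earlier formula $(x_1^2-\sum_{k>1}x_k^2)\vec e_1+2x_1\sum_{k>1}x_k\vec e_k$.
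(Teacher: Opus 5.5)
Your proof is correct and is essentially the paper's argument. The paper inducts on $(\vec x\diamond\vec n)^{p}\diamond\vec n=A\vec n+B\,(\vec x\wedge\vec n)\diamond\vec n$ with exactly your recurrence (its ansatz is your $A_p+C_p(\vec x\wedge\vec n)$ right-multiplied by $\vec n$), and closes with the Grade Reduction lemma and the fact that $(\vec x\wedge\vec n)^2$ is a scalar; the one real improvement is your Step 1, which proves $(\vec x\wedge\vec n)^2=(\vec x\cdot\vec n)^2-|\vec x|^2\in\mathbb R$, something the paper relies on in both the $p=2$ case and the inductive step but only asserts.
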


\begin{proof}

We first analyze the structure of
\[
(\vec x\diamond\vec n)^2\diamond\vec n.
\]

Using associativity of the geometric product and the relation
\[
\vec n\diamond\vec n=1,
\]
we obtain
\begin{align*}
(\vec x\diamond\vec n)^2\diamond\vec n
&=
(\vec x\diamond\vec n)
\diamond
(\vec x\diamond\vec n)
\diamond
\vec n \\[4pt]
&=
(\vec x\diamond\vec n)
\diamond
\Big[
(\vec x\diamond\vec n)\diamond\vec n
\Big] \\[4pt]
&=
(\vec x\diamond\vec n)\diamond\vec x.
\end{align*}

Next, using the standard decomposition of the geometric product,
\[
\vec x\diamond\vec n
=
(\vec x\cdot\vec n)
+
(\vec x\wedge\vec n),
\]
we compute
\begin{align*}
(\vec x\diamond\vec n)^2
&=
\Big[
(\vec x\cdot\vec n)
+
(\vec x\wedge\vec n)
\Big]^2 \\[4pt]
&=
(\vec x\cdot\vec n)^2
+
(\vec x\wedge\vec n)^2 \\[4pt]
&\quad
+
2(\vec x\cdot\vec n)
(\vec x\wedge\vec n).
\end{align*}

Multiplying on the right by $\vec n$ gives
\begin{align*}
(\vec x\diamond\vec n)^2\diamond\vec n
&=
\Big[
(\vec x\cdot\vec n)^2
+
(\vec x\wedge\vec n)^2
\Big]\vec n \\[4pt]
&\quad
+
2(\vec x\cdot\vec n)
(\vec x\wedge\vec n)\diamond\vec n.
\end{align*}

By Lemma~3.2,
\[
(\vec x\wedge\vec n)\diamond\vec n
\in
\langle Cl(V)\rangle_1.
\]

Moreover,
\[
(\vec x\cdot\vec n)^2
+
(\vec x\wedge\vec n)^2
\]
is scalar-valued, hence its multiplication by $\vec n$
produces a vector.

Therefore,
\[
(\vec x\diamond\vec n)^2\diamond\vec n
\in
\langle Cl(V)\rangle_1.
\]

This establishes the result for the case $p=2$.

\medskip

The previous computation reveals an important structural property:
after multiplication by $\vec n$, all higher-grade contributions
collapse into a linear combination of the vectors
\[
\vec n
\qquad\text{and}\qquad
(\vec x\wedge\vec n)\diamond\vec n.
\]

We now prove by induction that this structure is preserved for
every integer exponent $p\ge2$.

Assume inductively that for some integer $p\ge2$,
\[
(\vec x\diamond\vec n)^{p-1}\diamond\vec n
=
a\vec n
+
b\,(\vec x\wedge\vec n)\diamond\vec n,
\]
where $a,b\in\mathbb R$.

Using associativity of the geometric product,
\begin{align*}
(\vec x\diamond\vec n)^p\diamond\vec n
&=
(\vec x\diamond\vec n)
\diamond
\Big[
(\vec x\diamond\vec n)^{p-1}\diamond\vec n
\Big] \\[4pt]
&=
(\vec x\diamond\vec n)
\diamond
\Big[
a\vec n
+
b\,(\vec x\wedge\vec n)\diamond\vec n
\Big].
\end{align*}

Substituting
\[
\vec x\diamond\vec n
=
(\vec x\cdot\vec n)
+
(\vec x\wedge\vec n),
\]
we obtain
\begin{align*}
(\vec x\diamond\vec n)^p\diamond\vec n
&=
\Big[
(\vec x\cdot\vec n)
+
(\vec x\wedge\vec n)
\Big]
\diamond
\Big[
a\vec n
+
b\,(\vec x\wedge\vec n)\diamond\vec n
\Big] \\[6pt]
&=
a(\vec x\cdot\vec n)\vec n
+
b(\vec x\cdot\vec n)
\Big[
(\vec x\wedge\vec n)\diamond\vec n
\Big] \\[4pt]
&\quad
+
a(\vec x\wedge\vec n)\diamond\vec n \\[4pt]
&\quad
+
b(\vec x\wedge\vec n)
\diamond
\Big[
(\vec x\wedge\vec n)\diamond\vec n
\Big] \\[6pt]
&=
a(\vec x\cdot\vec n)\vec n
+
b(\vec x\cdot\vec n)
\Big[
(\vec x\wedge\vec n)\diamond\vec n
\Big] \\[4pt]
&\quad
+
a(\vec x\wedge\vec n)\diamond\vec n \\[4pt]
&\quad
+
b
\Big[
(\vec x\wedge\vec n)^2
\Big]\vec n.
\end{align*}

Factoring the vector contributions yields
\begin{align*}
(\vec x\diamond\vec n)^p\diamond\vec n
&=
\Big[
a(\vec x\cdot\vec n)
+
b(\vec x\wedge\vec n)^2
\Big]\vec n \\[4pt]
&\quad
+
\Big[
a+b(\vec x\cdot\vec n)
\Big]
(\vec x\wedge\vec n)\diamond\vec n.
\end{align*}

Define
\[
A=
a(\vec x\cdot\vec n)
+
b(\vec x\wedge\vec n)^2,
\]
and
\[
B=
a+b(\vec x\cdot\vec n).
\]

Since both $A$ and $B$ are scalar-valued quantities, we obtain
\[
(\vec x\diamond\vec n)^p\diamond\vec n
=
A\vec n
+
B(\vec x\wedge\vec n)\diamond\vec n.
\]

Finally, by Lemma~3.2,
\[
(\vec x\wedge\vec n)\diamond\vec n
\in
\langle Cl(V)\rangle_1.
\]

Hence both terms belong to the vector subspace, and therefore
\[
(\vec x\diamond\vec n)^p\diamond\vec n
\in
\langle Cl(V)\rangle_1.
\]

Adding the vector $\vec c\in V$ preserves the grade-$1$
structure, yielding
\[
f(\vec x)\in \langle Cl(V)\rangle_1.
\]

Therefore,
\[
f:V\to V
\]
is well-defined, and the Clifford Julia iteration remains closed
in the vector subspace despite the intermediate appearance of
higher-grade multivector terms.

\end{proof}

Therefore, the Clifford Julia iteration admits a stable algebraic
closure mechanism: every higher-grade term generated during the
iteration is reduced, after multiplication by $\vec n$, to a vector
combination of
\[
\vec n
\quad\text{and}\quad
(\vec x\wedge \vec n)\diamond \vec n.
\]

Hence,
\[
(\vec x\diamond \vec n)^p\diamond \vec n
\in
\left\langle Cl(V)\right\rangle_1
\]
for every integer $p\ge2$, which proves that the generalized Clifford
Julia iteration is vector invariant.

\begin{corollary}[Well-Defined Clifford Julia Dynamics]

Let $V$ be an $n$-dimensional real inner-product vector space and let
$Cl(V)$ denote its associated Clifford algebra. Consider the iteration
\[
\vec{x}_{k+1} = (\vec{x}_k \diamond \vec{n})^p \diamond \vec{n} + \vec{c},
\qquad p\ge2,
\]
where $\vec{x}_0, \vec{c} \in V$ and $\vec{n} \in V$ is a unit vector
satisfying $\vec{n}\diamond\vec{n}=1$.

Then the mapping
\[
f(\vec{x}) = (\vec{x}\diamond \vec{n})^p \diamond \vec{n} + \vec{c}
\]
is well-defined as a function
\[
f: V \to V.
\]

Moreover, every iterate $\vec{x}_k$ remains in the grade-$1$ subspace
$\langle Cl(V)\rangle_1 = V$, so the entire orbit of the system is
confined to the vector space $V$.

\end{corollary}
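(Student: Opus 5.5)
The corollary follows from Theorem~\ref{my-Thm} by induction on the iteration index $k$. The plan has three steps: establish that $f$ maps $V$ into $V$, propagate this along the orbit, and make the identification $\langle Cl(V)\rangle_1=V$ explicit.

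\textbf{Step 1 (well-definedness).} Fix $\vec x\in V$. By associativity of the geometric product, $(\vec x\diamond\vec n)^p\diamond\vec n$ is a single, unambiguous element of $Cl(V)$. Theorem~\ref{my-Thm} places it in $\langle Cl(V)\rangle_1$. Since $\vec c\in V=\langle Cl(V)\rangle_1$ and the grade-$1$ part is a linear subspace, the sum $f(\vec x)$ also lies in $\langle Cl(V)\rangle_1$. Hence $f$ restricts to a map $V\to V$.

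To make this independent of the somewhat informal inductive bookkeeping in the earlier proofs, I would also record a clean invariant: the quantity $q_k:=(\vec x\diamond\vec n)^{k}\diamond\vec n$ is a vector for every $k\ge1$. The argument is as follows.
\begin{itemize}
\item $q_1=\vec x$.
\item $q_{k}=(\vec x\diamond\vec n)\diamond q_{k-1}=\vec x\diamond(\vec n\diamond q_{k-1})$.
\item By the identity $\vec n\diamond q\diamond\vec n = 2(\vec n\cdot q)\vec n - q$ for vectors $q$, together with $\vec n^{-1}=\vec n$, one gets $\vec n\diamond q_{k-1}=\big(2(\vec n\cdot q_{k-1})\vec n-q_{k-1}\big)\diamond\vec n$.
\end{itemize}
This is the step I expect to be the real obstacle. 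Products $\vec x\diamond\vec y\diamond\vec n$ of three vectors are not vectors in general, because a trivector part $\vec x\wedge\vec y\wedge\vec n$ appears. Closure therefore needs the fact that all $q_k$ lie in the plane $\operatorname{span}\{\vec n,\vec x\}$. Once that is known, the trivector vanishes. I would prove it by induction, showing that $q_k\in\operatorname{span}\{\vec n,\vec x\}$: the even subalgebra of that plane is a copy of $\mathbb C$, and right multiplication by $\vec n$ carries it isomorphically onto the plane. This matches the decomposition $A\vec n+B(\vec x\wedge\vec n)\diamond\vec n$ used in the proof of the theorem. If $\vec x\parallel\vec n$, everything is scalar times $\vec n$ and the claim is trivial.

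\textbf{Step 2 (orbit confinement).} Induct on $k$. The base case is $\vec x_0\in V$ by hypothesis. If $\vec x_k\in V$, then Step~1 applied to $\vec x=\vec x_k$ gives $\vec x_{k+1}=f(\vec x_k)\in V$. Hence every iterate lies in $\langle Cl(V)\rangle_1$, and the orbit $\{\vec x_k\}_{k\ge0}$ is contained in $V$.

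\textbf{Step 3 (the identification).} Finally, note that $\langle Cl(V)\rangle_1=V$ by the standard construction of $Cl(V)$: the canonical embedding $V\hookrightarrow Cl(V)$ is injective with image exactly the grade-$1$ part. With this, the statement ``$f:V\to V$'' is literal rather than merely up to identification.
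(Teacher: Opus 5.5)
Your proposal is correct and follows the route the paper takes implicitly. The paper gives the corollary no separate proof and treats it as immediate from Theorem~\ref{my-Thm} (closure of $(\vec x\diamond\vec n)^p\diamond\vec n$ in $\langle Cl(V)\rangle_1$, plus $\vec c\in V$) together with induction on the iteration index $k$. Your supplementary check via the even subalgebra of $\operatorname{span}\{\vec x,\vec n\}$ is a compact repackaging of the paper's recursion $(\vec x\diamond\vec n)^p\diamond\vec n=A\vec n+B(\vec x\wedge\vec n)\diamond\vec n=\bigl(A+B(\vec x\wedge\vec n)\bigr)\diamond\vec n$, so it tidies the bookkeeping rather than introducing a different idea.
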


\section{Conclusion}

In this paper, we introduced a generalized Julia iteration formulated
through the geometric product in Clifford Algebra. The main result
establishes a structural invariance property: although intermediate
computations generate higher-grade multivector terms, the iteration
is closed in the vector subspace for every integer power $p\ge2$.

The analysis was carried out progressively across different geometric
settings. We first established the result in low-dimensional cases,
including $n=1$ and $n=2$, where the algebraic structure can be
explicitly verified. These cases provided the initial insight into the
grade-reduction mechanism and the cancellation of higher-grade terms.

Subsequently, we proved that the same invariance property extends to
$\mathbb{R}^n$, showing that the Clifford Julia operator preserves the
grade-$1$ subspace independently of the dimension. This confirms that
the observed structural behavior is not a low-dimensional artifact but
an intrinsic property of the geometric algebra framework.

Finally, the general formulation in an arbitrary real inner-product
space $V$ shows that the iteration defines a well-posed discrete
dynamical system whose orbits remain entirely confined to the
vector space $V$. This establishes the full dimensional invariance
of the Clifford Julia dynamics.

The present framework provides a natural algebraic extension of
classical complex Julia dynamics to higher-dimensional Clifford
settings. Unlike standard complex formulations, the geometric product
introduces multivector interactions while still preserving closure at
the vector level, revealing an intrinsic grade-reduction mechanism.

This work may be regarded as a foundational step toward a broader
theory of Clifford fractal dynamics. Future research directions
include the analysis of boundedness regions, escape-time algorithms,
numerical visualization of higher-dimensional Julia sets, and the
study of stability, symmetry, and bifurcation phenomena in Clifford
iterative systems.

\end{document}